\newcommand{\Ric}{\operatorname{Ric}}
\newcommand{\IR}{{\mathbb{R}}}
\newcommand{\DD}{{\mathscr{D}}}
\newcommand{\Hcal}{{\mathcal{H}}}
\newcommand{\Wcal}{{\mathcal{W}}}
\newcommand{\Bcal}{{\mathcal{B}}}
\newcommand{\Acal}{{\mathcal{A}}}
\newcommand{\rhohat}{\hat{\rho}}
\newcommand{\dd}{\operatorname{d}}
\newcommand{\eChar}{\begin{enumerate}[(i)]}
\newcommand{\eCharR}{\begin{enumerate}[(a)]}
\newcommand{\eBr}{\begin{enumerate}[(1)]}
\newcommand{\diam}{\operatorname{diam}}
\title
{A note on a Bonnet-Myers type diameter bound for graphs with positive entropic Ricci curvature}
\author[1]{S. Kamtue}
\affil[1]{Department of Mathematical Sciences, Durham University}
\date{\today}
\theoremstyle{plain}
\newtheorem{lemma}{Lemma}[section]
\newtheorem{theorem}[lemma]{Theorem}
\newtheorem{corollary}[lemma]{Corollary}
\theoremstyle{definition}
\newtheorem{definition}{Definition}[section]
\newtheorem{rem}[lemma]{Remark}
\renewenvironment{proof}[1][\proofname]{{\bfseries #1.}}{\qed}
\numberwithin{equation}{section}
\begin{document}

\maketitle

\begin{abstract}
An equivalent definition of entropic Ricci curvature on discrete spaces was given in terms of the global gradient estimate in \cite[Theorem 3.1]{EF18}. With a particular choice of the density function $\rho$, we obtain a localized gradient estimate, which in turns allow us to apply the same technique as in \cite{LMP18} to derive a Bonnet-Myers type diameter bound for graphs with positive entropic Ricci curvature. However, the case of the hypercubes indicates that the bound may be not optimal (where $\theta$ is chosen to be logarithmic mean by default). If $\theta$ is arithmetic mean, the Bakry-\'Emery criterion can be recovered and the diameter bound is optimal as it can be attained by the hypercubes. 
\end{abstract}

\section{Introduction} \label{sec:intro}
The notion of entropic Ricci curvature on discrete spaces (i.e., Markov chains, or graphs) was introduced by Erbar and Maas \cite{EM12} inspired by work of Sturm \cite{Sturm} and Lott and Villani \cite{LV} which describes the lower bound of Ricci curvature via the displacement convexity of the entropy functional. Equivalent definitions of Entropic curvature are also given in terms of Bochner's inequality \cite{EM12} and in terms of gradient estimates \cite{EF18}. Detailed notations and definitions are given in Section \ref{sec:setup} and proofs are presented in Section \ref{sec:proof}.

\begin{definition}[Entropic curvature] \label{defn:entropic}
An irreducible and reversible Markov kernel $Q$ on a finite discrete space $X$ (with a steady state $\pi$) \textbf{has entropic curvature at least $\kappa \in \IR$}, written as $\Ric(Q)\ge \kappa$, if and only if the entropy functional $\Hcal$ satisfies the following $\kappa$-convexity inequality:
\begin{equation}
\Hcal(\rho_t) \le (1-t)\Hcal(\rho_0)+t\Hcal(\rho_1)-\frac{\kappa}{2}t(1-t)\Wcal(\rho_0,\rho_1)^2
\end{equation}
for every constant speed geodesic $(\rho_t)_{t\in[0,1]}$ in the (modified) $L^2$-Wasserstein space of probabaility densities $(\DD(X),\Wcal)$.
\end{definition}

\begin{theorem}[Equivalent definitions] \label{thm:equivalent}
For a finite set $X$ equipped with an irreducible and reversible Markov kernel $Q$ (and the steady state $\pi$), the following properties are equivalent:
\begin{itemize}
	\item[$(1)$] $\Ric(Q) \ge \kappa$;
	\item[$(2)$] Bochner's inequality holds for all $\rho \in \DD(X)$ and $f\in C(X)$: 
	\begin{equation} \label{eq:bochner}
	\Bcal(\rho,f) \ge \kappa \Acal(\rho,f),
	\end{equation}
	where $$\Acal(\rho,f):= \langle \hat{\rho} \cdot \nabla f, \nabla f \rangle_{\pi},$$
	and $$\Bcal(\rho,f):= \frac{1}{2}\langle \hat{\Delta}\rho\cdot \nabla f, \nabla f\rangle_{\pi} - \langle \hat{\rho} \cdot \nabla f, \nabla \Delta f \rangle_{\pi},$$ where 
	\begin{align*}
	\hat{\rho}(x,y) &:= \theta(\rho(x), \rho(y)),\\
	\hat{\Delta}\rho &:= \partial_1 \theta (\rho(x), \rho(y))\Delta \rho(x) + \partial_2 \theta (\rho(x), \rho(y))\Delta \rho(y),
	\end{align*}
	and $\theta$ is a logarithmic mean: $\theta(a,b)=\frac{a-b}{\log a - \log b}$;
	\item[$(3)$] Global gradient estimate holds for all $\rho \in \DD(X)$ and $f\in C(X)$ and $t\ge 0$,
	\begin{equation*} 
	|\nabla P_tf|^2_\rho \le e^{-2\kappa t} |\nabla f|^2_{P_t\rho},
	\end{equation*}
	or more explicitly
	\begin{equation} \label{eq:global_grad_est}
	\frac{1}{2} \sum_{u,v} (P_tf(v)-P_tf(u))^2 \hat{\rho}(u,v) Q(u,v)\pi(u) \le
	e^{-2\kappa t} \frac{1}{2} \sum_{u,v} (f(v)-f(u))^2 \widehat{P_t\rho}(u,v) Q(u,v)\pi(u)
	\end{equation}
	where $P_t=e^{t\Delta}$ denotes the heat semigroup.
\end{itemize}
\end{theorem}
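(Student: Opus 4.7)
The plan is to establish (1) $\Leftrightarrow$ (2) and (2) $\Leftrightarrow$ (3) separately. For (1) $\Leftrightarrow$ (2), the key ingredient is the discrete Otto calculus of Erbar--Maas. Given $\rho \in \DD(X)$ and $f \in C(X)$, I consider a constant-speed $\Wcal$-geodesic $(\rho_t)_{t \in [0,1]}$ through $\rho_0 = \rho$ whose initial tangent corresponds to the potential $f$ via the discrete continuity/Hamilton--Jacobi system associated with the logarithmic mean $\theta$. A direct Riemannian-style second-derivative computation then shows, with a common positive normalization, that $\tfrac{d^2}{dt^2}\big|_{t=0}\Hcal(\rho_t)$ reproduces $\Bcal(\rho, f)$ while $\Wcal(\rho_0, \rho_1)^2$ reproduces $\Acal(\rho, f)$. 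Hence $\kappa$-geodesic convexity of $\Hcal$ along every constant-speed geodesic is equivalent to the pointwise Bochner inequality $\Bcal(\rho, f) \ge \kappa \Acal(\rho, f)$ for all $(\rho, f)$. Subtleties at densities with vanishing coordinates, where $\theta$ degenerates, are handled by approximation with strictly positive densities together with continuity of $\Bcal$ and $\Acal$ in $\rho$.

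For (2) $\Rightarrow$ (3), fix $T > 0$, $\rho \in \DD(X)$, $f \in C(X)$, and introduce the interpolation
\begin{equation*}
\Phi(s) \;:=\; e^{-2\kappa s}\,\Acal\bigl(P_s \rho,\, P_{T-s} f\bigr), \qquad s \in [0,T],
\end{equation*}
so that $\Phi(0) = \Acal(\rho, P_T f)$ is the left-hand side of \eqref{eq:global_grad_est} at time $t = T$ and $\Phi(T) = e^{-2\kappa T}\Acal(P_T \rho, f)$ is the right-hand side. Using $\partial_s P_s \rho = \Delta P_s \rho$, $\partial_s P_{T-s} f = -\Delta P_{T-s} f$, together with the definition of $\hat{\Delta}\rho$ and the product rule, a short computation shows that the two derivative contributions to $\Acal$ combine to exactly $2\Bcal(P_s\rho, P_{T-s} f)$, whence
\begin{equation*}
\Phi'(s) \;=\; 2e^{-2\kappa s}\bigl(\Bcal - \kappa \Acal\bigr)(P_s \rho,\, P_{T-s} f) \;\ge\; 0
\end{equation*}
by (2). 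Thus $\Phi$ is nondecreasing and \eqref{eq:global_grad_est} follows. For the converse (3) $\Rightarrow$ (2), both sides of \eqref{eq:global_grad_est} reduce to $\Acal(\rho, f)$ at $t = 0$; differentiating the inequality at $t = 0^+$ and using the same two identities yields $2\langle \hat\rho \nabla f, \nabla \Delta f\rangle_\pi \le -2\kappa \Acal(\rho,f) + \langle \widehat{\Delta \rho}\,\nabla f, \nabla f\rangle_\pi$, which rearranges to exactly $\Bcal(\rho,f) \ge \kappa \Acal(\rho,f)$.

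The main obstacle is the Hessian computation in (1) $\Leftrightarrow$ (2): the geodesic system in $(\DD(X), \Wcal)$ is a nonlinear coupled ODE in $(\rho_t, \psi_t)$ involving $\theta$ and its partial derivatives, and verifying that the Hessian of $\Hcal$ agrees (up to normalization) with $\Bcal$ requires careful bookkeeping of the Riemannian structure, with additional care needed at the boundary of the simplex. I would cite the Erbar--Maas framework for this step rather than rederive it. By contrast, once the Bochner calculus of step (2) is in hand, the semigroup interpolation giving (2) $\Leftrightarrow$ (3) is a short direct computation, and this is the part I would write out in full detail.
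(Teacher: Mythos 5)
Your proposal is correct and follows essentially the same route as the paper, whose entire proof of this theorem is to cite \cite{EM12} (Theorem 4.5) for $(1)\Leftrightarrow(2)$ and \cite{EF18} (Theorem 3.1) for $(2)\Leftrightarrow(3)$. Your semigroup interpolation $\Phi(s)=e^{-2\kappa s}\Acal(P_s\rho,P_{T-s}f)$, with $\Phi'(s)=2e^{-2\kappa s}(\Bcal-\kappa\Acal)(P_s\rho,P_{T-s}f)\ge 0$ and the matching derivative argument at $t=0^+$ for the converse, is precisely the argument of the cited reference, so you are simply supplying details the paper delegates to the literature (and, like the paper, deferring the Otto-calculus Hessian computation for $(1)\Leftrightarrow(2)$ to Erbar--Maas).
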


The contribution of this paper is to follow ideas from \cite{EF18} to derive a similar local gradient estimate, and to apply the same technique as in \cite{LMP18} to prove a Bonnet-Myers type diameter bound on the underlying graph $X$ (when $Q$ is a simple random walk) in the case of a positive lower bound of entropic curvature.

\begin{theorem}[Local gradient estimate] \label{thm:local_grad_est}
	If $\Ric(Q) \ge \kappa$, then for all $x\in X$ and $f\in C(X)$ and all $\varepsilon\ge 0$ we have
	\begin{equation} \label{eq:local_grad_est}
	\Gamma(P_tf)(x)\pi(x)  \le 
	\dfrac{e^{-2\kappa t}}{2\theta(1,\varepsilon)}[P_t\Gamma(f)(x)\pi(x)+\varepsilon \sum_{y\in N(x)}P_t\Gamma(f)(y)\pi(y)].
	\end{equation}
	where $\Gamma(f)(x):= \sum\limits_{y} (f(y)-f(x))^2Q(x,y)$. 
\end{theorem}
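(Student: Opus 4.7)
The plan is to derive (\ref{eq:local_grad_est}) by specializing the global gradient estimate (\ref{eq:global_grad_est}) from Theorem~\ref{thm:equivalent}(3) to a density $\rho$ that is concentrated at $x$ and at level $\varepsilon$ on its combinatorial neighborhood, in the spirit of \cite{EF18}. Concretely, I would set $\rho(x)=1$, $\rho(y)=\varepsilon$ for every $y\in N(x)$, and $\rho(z)=0$ elsewhere, using the $1$-homogeneity of the logarithmic mean $\theta$ to ignore the normalization $\sum_u\rho(u)\pi(u)=1$ (both sides of (\ref{eq:global_grad_est}) scale identically in $\rho$). If strict positivity is required for $\rho\in\DD(X)$, one can replace the $0$'s by $\delta>0$ and let $\delta\downto 0$ at the end, using continuity of the finite sums.

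For the left-hand side of (\ref{eq:global_grad_est}) with this choice, only edges with at least one endpoint in $\{x\}\cup N(x)$ contribute. On an edge $\{x,y\}$ with $y\in N(x)$ we have $\hat\rho(x,y)=\theta(1,\varepsilon)$; pairing the ordered contributions $(x,y)$ and $(y,x)$ via reversibility $Q(u,v)\pi(u)=Q(v,u)\pi(v)$, these edges assemble into exactly $\theta(1,\varepsilon)\,\Gamma(P_tf)(x)\pi(x)$. Edges lying between two neighbors of $x$ contribute a non-negative $\varepsilon$-weighted term that I simply discard, which gives the lower bound
\[
\theta(1,\varepsilon)\,\Gamma(P_tf)(x)\pi(x)\;\le\;\tfrac12\sum_{u,v}(P_tf(v)-P_tf(u))^2\hat\rho(u,v)Q(u,v)\pi(u).
\]

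For the right-hand side the key estimate is the classical inequality $\theta(a,b)\le\tfrac{a+b}{2}$ between logarithmic and arithmetic means, applied pointwise to $\widehat{P_t\rho}$. Splitting the resulting expression into its two summands and using reversibility (renaming $u\leftrightarrow v$ in one of them), each half collapses to $\tfrac14\sum_u(P_t\rho)(u)\,\Gamma(f)(u)\pi(u)$, so that
\[
\tfrac12\sum_{u,v}(f(v)-f(u))^2\widehat{P_t\rho}(u,v)Q(u,v)\pi(u)\;\le\;\tfrac12\sum_u(P_t\rho)(u)\,\Gamma(f)(u)\pi(u).
\]
Self-adjointness of $P_t$ in $L^2(\pi)$ converts the right-hand side into $\tfrac12\sum_u\rho(u)\,(P_t\Gamma(f))(u)\pi(u)$, which by our choice of $\rho$ equals $\tfrac12\bigl[P_t\Gamma(f)(x)\pi(x)+\varepsilon\sum_{y\in N(x)}P_t\Gamma(f)(y)\pi(y)\bigr]$. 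Combining with the $e^{-2\kappa t}$ factor in (\ref{eq:global_grad_est}) and dividing by $\theta(1,\varepsilon)$ yields (\ref{eq:local_grad_est}); the degenerate case $\varepsilon=0$ is trivial since the prefactor diverges.

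The main obstacle I foresee is not any single computation but the bookkeeping: making sure the factors of $\tfrac12$ coming from (\ref{eq:global_grad_est}), from the mean-inequality step, and from the ordered-versus-unordered summations match exactly to reproduce the prefactor $\tfrac{1}{2\theta(1,\varepsilon)}$. A secondary conceptual point is to recognise that $\theta\le$ arithmetic mean (rather than the looser $\theta\le\max$) is the right bound here: it is precisely this choice that delivers $P_t\Gamma(f)$ as a $(1,\varepsilon)$-convex combination across the edge, matching the structure of the claimed local estimate.
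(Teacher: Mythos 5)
Your proposal is correct and follows essentially the same route as the paper: the same test density $\rho=\mathbf{1}_x+\varepsilon\sum_{y\in N(x)}\mathbf{1}_y$ (with homogeneity to drop normalization), the same lower bound on the left-hand side by discarding the non-negative contributions away from $x$, and the same $\theta(a,b)\le\tfrac{a+b}{2}$ plus reversibility estimate on the right-hand side. Your use of self-adjointness of $P_t$ in $L^2(\pi)$ is just a repackaging of the paper's symmetric heat-kernel computation, and the bookkeeping of the factors of $\tfrac12$ works out exactly as you anticipate.
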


\begin{theorem}[Diameter bound] \label{thm:BonnetMyers}
Let $Q$ represent a simple random walk on $X$ with strictly positive entropic Ricci curvature $\Ric(Q)\ge \kappa>0$. Then the diameter is bounded from above by $\diam(X) \le \frac{2}{\kappa}\sqrt{\frac{D\log D}{D-1}}$ where $D$ is the maximal (vertex) degree.
\end{theorem}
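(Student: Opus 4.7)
The plan is to run a Wang-type heat-semigroup argument along a diameter-realizing pair of vertices, in the spirit of \cite{LMP18}. Fix $x_0, x_d \in X$ with $d(x_0, x_d) = \diam(X) =: d$, and take the $1$-Lipschitz test function $f(z) := d(x_0, z)$, so that $f(x_d)-f(x_0)=d$. Because $Q$ is the simple random walk and $f$ is $1$-Lipschitz on the graph, $\Gamma(f)(z) = \frac{1}{\deg(z)}\sum_{y\sim z}(f(y)-f(z))^2 \le 1$ pointwise, hence $P_t\Gamma(f) \le 1$ by the $L^\infty$-contraction property of the heat semigroup. Plugging this into Theorem~\ref{thm:local_grad_est} and using $\pi(y)\propto \deg(y)$, which gives $\sum_{y\in N(z)}\pi(y) \le D\,\pi(z)$, one obtains the uniform estimate
\begin{equation*}
\Gamma(P_t f)(z) \;\le\; \frac{(1+\varepsilon D)\,e^{-2\kappa t}}{2\,\theta(1,\varepsilon)}, \qquad z\in X,\; t\ge 0,\; \varepsilon>0.
\end{equation*}

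Next I would translate this gradient bound into a pointwise bound on the Laplacian via Cauchy-Schwarz: $|\Delta g(z)|^2 = \bigl|\sum_y (g(y)-g(z))Q(z,y)\bigr|^2 \le \Gamma(g)(z)$, so $|\Delta P_tf(z)| \le e^{-\kappa t}\sqrt{(1+\varepsilon D)/(2\theta(1,\varepsilon))}$ for every $z\in X$. Now set $h(t) := P_tf(x_d) - P_tf(x_0)$, so that $h(0)=d$, $h(t)\to 0$ as $t\to\infty$ by ergodicity of the finite irreducible chain (both $P_tf(x_d)$ and $P_tf(x_0)$ converge to $\IE_\pi f$), and $h'(t) = \Delta P_t f(x_d) - \Delta P_t f(x_0)$. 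Integrating the heat equation gives
\begin{equation*}
d \;=\; h(0)-\lim_{T\to\infty}h(T) \;=\; -\int_0^\infty h'(t)\,dt \;\le\; 2\sqrt{\frac{1+\varepsilon D}{2\,\theta(1,\varepsilon)}}\int_0^\infty e^{-\kappa t}\,dt \;=\; \frac{2}{\kappa}\sqrt{\frac{1+\varepsilon D}{2\,\theta(1,\varepsilon)}}.
\end{equation*}
Specializing to $\varepsilon = 1/D$ makes $1+\varepsilon D = 2$ and $\theta(1,1/D) = (D-1)/(D\log D)$, so the right-hand side collapses to the claimed $\tfrac{2}{\kappa}\sqrt{D\log D/(D-1)}$.

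The main conceptual point, rather than a technical obstacle, is the second step: the gradient bound by itself controls the spatial variation of $P_tf$ but is useless at $t=0$, where it just recovers $\Gamma(f)\le 1$; the factor $1/\kappa$ in the diameter bound is produced only by integrating the $e^{-\kappa t}$ decay of $|\Delta P_tf|$ over $t\in[0,\infty)$ and pairing this with the convergence $P_tf\to\IE_\pi f$. Verifying the exchange of the $T\to\infty$ limit with the integral is routine once one observes that $P_tf\to\IE_\pi f$ exponentially on a finite irreducible chain. The Cauchy-Schwarz step $|\Delta g|\le\sqrt{\Gamma g}$ is presumably the main source of slack, and $\varepsilon=1/D$ is evidently not the minimizer of $(1+\varepsilon D)/(2\theta(1,\varepsilon))$ in general; these two effects together are consistent with the abstract's remark that the bound is not sharp on the hypercube.
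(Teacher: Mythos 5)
Your proposal is correct and follows essentially the same route as the paper: the $1$-Lipschitz distance function, the local gradient estimate with $\varepsilon=1/D$ giving the constant $c=\tfrac{D\log D}{D-1}$, Cauchy--Schwarz to bound $|\Delta P_tf|$ by $\sqrt{\Gamma(P_tf)}$, and integration of the exponential decay over $t\in[0,\infty)$. The only cosmetic difference is that you invoke ergodicity to get $P_Tf(x_d)-P_Tf(x_0)\to 0$, whereas the paper deduces this directly from the decaying gradient bound along a connecting path; both are valid.
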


\begin{rem}
It is known that the entropic curvature of (a simple random walk on) the discrete hypercube $\mathcal{Q}^n$ is $\frac{2}{n}$ (see \cite[Example 5.7]{EM12}). Therefore, in view of the hypercubes, the bound in Theorem \ref{thm:BonnetMyers} is not optimal: $$n=\diam(\mathcal{Q}^n)\le n\sqrt{\frac{n}{n-1} \log n}.$$
However, if we replace the logarithmic mean by the arithmetic mean for $\theta$ in Bochner's fomula \eqref{eq:bochner} and inequality \eqref{eq:global_grad_est}, the local gradient estimate in Theorem \ref{thm:local_grad_est} would imply (by taking $\varepsilon=0$) the Bakry-\'Emery curvature criterion $CD(\kappa, \infty)$: $\Gamma(P_t f) (x) \le  e^{-2\kappa t} P_t \Gamma(f)(x)$ in the sense of \cite[Corollary 3.3]{LL15}. This implication has already been mentioned in the survey \cite{Msurvey}. Consequently, we obtain a diameter bound: $\diam(X) \le \frac{2}{\kappa}$, which is sharp and the equality is attained if and only if $X$ is a hypercube $\mathcal{Q}^n$ (for details see \cite{LMP18} and \cite{rigidityBE}).
\end{rem}

\section{Setup and notations}
\label{sec:setup}

\subsection{Notions associated to a discrete Markov chain}

We start with a Markov chain $(X,Q)$, where $X$ is a finite set and $Q:X\times X \rightarrow \IR^+ \cup \{0\}$ is a Markov kernel $Q$, i.e. $\sum_{y\in X} Q(x,y)=1$ for all $x\in X$. Furthermore, we assume that $Q$ is irreducible and reversible, which implies that there exists a unique stationary probability measure $\pi$ on $X$ satisfying $\sum_{x\in X} \pi(x)=1$ and the detail balanced equations: $$Q(x,y)\pi(x)=Q(y,x)\pi(y) \qquad \forall x,y\in X.$$

The set of probability densities (with respect to $\pi$) is defined as
$$\mathscr{D}(X):=\left\{\rho: X \rightarrow \IR^{+}\cup \{0\} \bigg| \sum_{x\in X}\pi(x)\rho(x)=1 \right\}.$$
The entropy functional, defined on $\mathscr{D}(X)$, is given by 
$$\Hcal(\rho):=\sum_{x\in X} \rho(x)\log\rho(x)\pi(x).$$

The \textit{discrete gradient} $\nabla: C(X) \rightarrow C(X\times X)$, \textit{discrete divergence} $\nabla\cdot: C(X\times X) \rightarrow C(X)$, and \textit{laplacian} $\Delta := \nabla\cdot\nabla$ are defined as follows:
\begin{definition}
	For all $f,g\in C(X)$ and $U,V\in C(X\times X)$,
	\begin{align*}
	\nabla f(x,y) &:= f(y)-f(x), \\
	\nabla\cdot V(x) &:= \frac{1}{2} \sum_{y\in X} (V(x,y)-V(y,x))Q(x,y),\\
	\Delta f(x) &:= \sum_{y\in X} (f(y)-f(x))Q(x,y).
	\end{align*}
	The inner products are defined as
	\begin{align*}
	\langle f,g \rangle_{\pi} &:= \sum_{x\in X} f(x)g(x)\pi(x), \\
	\langle U,V \rangle_{\pi} &:= \frac{1}{2}\sum_{x,y\in X} U(x,y)V(x,y)Q(x,y)\pi(x),
	\end{align*}
	and for all $\rho\in \DD(X)$,
	\begin{align*}
	\langle U,V \rangle_{\rho} := \langle \rhohat\cdot U,V\rangle_{\pi} = \frac{1}{2}\sum_{x,y\in X} U(x,y)V(x,y)\rhohat(x,y)Q(x,y)\pi(x),
	\end{align*}
	where $\hat{\rho}(x,y) := \theta(\rho(x), \rho(y))$ and $\theta$ is a \textit{suitable mean} satisfying Assumption 2.1 in \cite{EM12}. By default, $\theta$ is chosen to be the logarithmic mean: $\theta(a,b)=\frac{a-b}{\log a - \log b}$.
	
	Furthermore, we already introduced
	\begin{align*}
	\Acal(\rho,f) &:= |\nabla f|^2_\rho= \langle \hat{\rho} \cdot \nabla f, \nabla f \rangle_{\pi},\\
	\Bcal(\rho,f) &:= \frac{1}{2}\langle \hat{\Delta}\rho\cdot \nabla f, \nabla f\rangle_{\pi} - \langle \hat{\rho} \cdot \nabla f, \nabla \Delta f \rangle_{\pi},
	\end{align*} where $\hat{\Delta}\rho := \partial_1 \theta (\rho(x), \rho(y))\Delta \rho(x) + \partial_2 \theta (\rho(x), \rho(y))\Delta \rho(y).$

\end{definition}

\begin{definition}[Discrete transport metric]
	For $\bar{\rho}_0,\bar{\rho}_1\in \DD(X)$,
	\begin{equation*}
	\Wcal(\bar{\rho}_0,\bar{\rho}_1):=\inf \left\{ \int_0^1 \Acal(\rho_t,f_t) \dd t \ \bigg| \ 
	(\rho_t,f_t) \in CE(\bar{\rho}_0,\bar{\rho}_1) \right\}^{\frac{1}{2}},
	\end{equation*}
	where the infimum is taken over the set $CE(\rho_0,\rho_1)$ which consists of all sufficiently regular curves $(\rho_t)_{t\in[0,1]}$ on $\DD(X)$ and $(f_t)_{t\in[0,1]}$ on $C(X)$ which satisfy the \emph{continuity equation} $\partial_t \rho_t+ \nabla\cdot(\rhohat_t\nabla f_t)=0$ and $\rho_0=\bar{\rho}_0, \rho_1=\bar{\rho}_1$. It was shown that $\Wcal$ is a metric on $\DD(X)$. We refer to \cite{EM12, MaasGF} for further details. Note that this notion of the $\Wcal$-metric is relevant in Definition \ref{defn:entropic} of entropic curvature.

\end{definition}

\subsection{Graph theoretical notions}
The kernel $Q$ induces a graph structure on $X$ by assigning an edge $x\sim y$ if and only if $Q(x,y)>0$. Note that the graph is connected and undirected, due to irreducibility and reversibility of $Q$, respectively. For a vertex $x\in X$, denote $N(x):=\{y\in X| \ x\sim y \}$ the set of neighbors of $x$, and $d_x:=|N(x)|$ the degree of $x$, and $D:=\max_{x\in X}{d_x}$ the maximal degree. The graph is equipped with the usual \emph{combinatorial distance function} $d$ where $d(x,y)$ is the length of shortest path(s) between $x$ and $y$, and the diameter of the graph is defined as $\diam(X)=\max_{x,y\in X} d(x,y)$. In this note, we restrict $Q$ to be a simple random walk, which is given by $Q(x,y)=\frac{1}{d_x}$ for all $y\in N(x)$, and $\pi(x)=d_x/(\sum_{v\in X}{d_v})$.

\section{Proofs}
\label{sec:proof}

\begin{proof}[Proof of Theorem \ref{thm:equivalent}]
	Equivalence $(1) \Leftrightarrow (2)$ was stated in \cite[Theorem 4.5]{EM12} under the assumption that $\theta$ is the logarithmic mean. Equivalence $(2) \Leftrightarrow (3)$ was stated in \cite[Theorem 3.1]{EF18}, regardless of the choice of a `suitable' mean for $\theta$.
\end{proof}

\begin{proof}[Proof of Theorem \ref{thm:local_grad_est}]

The proof follows ideas of \cite[Corollary 3.4]{EF18}. Note that the inequality \eqref{eq:global_grad_est} is homogeneous in $\rho$. Therefore, we can drop the requirement that $\rho$ is a probability density: $\sum_x\rho(x)\pi(x)=1$. We localize inequality \eqref{eq:global_grad_est} by choosing $\rho={\bf{1}}_{x}+\varepsilon \sum_{y\in N(x)} {\bf{1}}_{y}$ for a fixed $x\in V$ and a parameter $\varepsilon\in [0,\infty)$.

In particular we know that $\hat{\rho}(x,y)=\hat{\rho}(y,x)=\theta(1,\varepsilon)$ for all $y\in N(x)$. The left-hand-side of \eqref{eq:global_grad_est} has the following lower bound
\begin{equation} \label{eq:local_grad_est_lhs}
L.H.S. \ge \sum_{y\in N(x)} (P_tf(y)-P_tf(x))^2\theta(1,\varepsilon) Q(x,y)\pi(x)=\theta(1,\varepsilon) \cdot \Gamma(P_tf)(x)\pi(x) 
\end{equation}

On the other hand, we have the following bound on
the right-hand-side of \eqref{eq:global_grad_est} by 
\begin{eqnarray} \label{eq:bound_by_am}
R.H.S. 
&\le& e^{-2\kappa t} \frac{1}{2} \sum_{u,v} (f(v)-f(u))^2 \dfrac{P_t\rho(u)+P_t\rho(v)}{2} Q(u,v)\pi(u) \nonumber \\
&=& e^{-2\kappa t} \frac{1}{2} \sum_{u,v} (f(v)-f(u))^2 P_t\rho(u) Q(u,v)\pi(u)
\end{eqnarray}
due to $\theta(s,t)\le (s+t)/2$ and symmetry from interchanging $u$ and $v$.

We now apply the heat kernel $p_t(\cdot,\cdot)$ given by $P_tg(u)=\sum_{z} p_t(u,z)g(z)\pi(z)$ for every function $g$. With our chosen $\rho$, we obtain $P_t\rho(u)=p_t(u,x)\rho(x)\pi(x)+\varepsilon \sum_{y\in N(x)} p_t(u,y)\rho(y)\pi(y)$, which we substitute into \eqref{eq:bound_by_am} and use the symmetry of heat kernel: $p_t(u,v)=p_t(v,u)$ to derive
\begin{eqnarray} \label{eq:local_grad_est_rhs}
R.H.S. &\le \dfrac{e^{-2\kappa t}}{2} \bigg[& \pi(x)\sum_{u} p_t(u,x)\pi(u) \sum_{v} (f(v)-f(u))^2Q(u,v) + \nonumber \\
& &  \varepsilon\sum_{y\in N(x)} \pi(y) \sum_{u} p_t(u,y)\pi(u) (\sum_{v}(f(v)-f(u))^2Q(u,v)) \bigg] \nonumber \\
&=\dfrac{e^{-2\kappa t}}{2}\bigg[& \pi(x) P_t\Gamma (f)(x)+\varepsilon \sum_{y\in N(x)} \pi(y) P_t\Gamma (f)(y) \bigg].
\end{eqnarray}
The desired inequality then follows from \eqref{eq:local_grad_est_lhs} and \eqref{eq:local_grad_est_rhs}.
\end{proof}

For the underlying graph $X$ with $Q$ representing a simple random walk, we have the following corollary as an immediate consequence of Theorem \ref{thm:local_grad_est}.
\begin{corollary} \label{cor:result_grad_est}
Let $Q$ represent a simple random walk on $X$ with entropic Ricci curvature $\Ric(Q)\ge \kappa$. Then we have the following gradient estimate:
\begin{equation} \label{eq:grad_est_result}
\Gamma(P_t f) (x) \le c\cdot e^{-2\kappa t} \|P_t \Gamma(f)\|_{\infty}
\end{equation}
where $c:=\frac{D\log D}{D-1}$ and $D$ is the maximal degree.
\end{corollary}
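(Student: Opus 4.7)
The plan is to deduce the corollary as a one-line consequence of Theorem \ref{thm:local_grad_est} after a judicious choice of $\varepsilon$. First I would divide the local gradient estimate through by $\pi(x)$ to obtain
\begin{equation*}
\Gamma(P_tf)(x) \le \frac{e^{-2\kappa t}}{2\theta(1,\varepsilon)}\left[P_t\Gamma(f)(x) + \varepsilon \sum_{y \in N(x)} \frac{\pi(y)}{\pi(x)} P_t\Gamma(f)(y)\right].
\end{equation*}
Since $Q$ is a simple random walk, $\pi(x) = d_x/\sum_v d_v$, so $\pi(y)/\pi(x) = d_y/d_x$. Replacing every $P_t\Gamma(f)(z)$ by the uniform bound $\|P_t\Gamma(f)\|_\infty$ and using $\sum_{y\in N(x)} d_y \le d_x \cdot D$ yields
\begin{equation*}
\Gamma(P_tf)(x) \le \frac{e^{-2\kappa t}(1 + \varepsilon D)}{2\theta(1,\varepsilon)}\,\|P_t\Gamma(f)\|_\infty.
\end{equation*}

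It remains to pick $\varepsilon$ so that the prefactor equals $c = \frac{D\log D}{D-1}$. Setting $\varepsilon = 1/D$ and using $\theta(1,1/D) = \frac{1 - 1/D}{\log 1 - \log(1/D)} = \frac{D-1}{D\log D}$ gives $1 + \varepsilon D = 2$ and therefore $\frac{1+\varepsilon D}{2\theta(1,\varepsilon)} = \frac{D\log D}{D-1}$, which is exactly the claimed constant.

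There is essentially no obstacle: the corollary is just a careful specialization of Theorem \ref{thm:local_grad_est} for the simple random walk, combined with a crude sup-bound on the neighbourhood terms. The only small point is to observe that $\varepsilon = 1/D$ is a natural candidate (it makes the two terms inside the bracket comparable in size) and that the resulting constant is indeed $\frac{D\log D}{D-1}$; one need not verify that this is the pointwise minimum of $(1+\varepsilon D)/(2\theta(1,\varepsilon))$ over $\varepsilon \ge 0$, only that this choice suffices.
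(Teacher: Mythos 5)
Your proof is correct and follows essentially the same route as the paper: specialize Theorem \ref{thm:local_grad_est}, bound the neighbourhood sum by $1+\varepsilon D$ times the sup norm, and choose $\varepsilon = 1/D$ so that $\theta(1,1/D) = \frac{D-1}{D\log D}$ yields exactly the constant $c$. Your write-up is in fact slightly more explicit than the paper's, which leaves the evaluation of $\theta(1,1/D)$ implicit.
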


\begin{proof}[Proof of Corollary \ref{cor:result_grad_est}]
Theorem \ref{thm:local_grad_est} implies that 
$\Gamma(P_t f) (x) \le c_{\varepsilon,x}\cdot e^{-2\kappa t} \|P_t \Gamma(f)\|_{\infty}$. where $c_{\varepsilon,x}:=\dfrac{1+\varepsilon \sum_{y\in N(x)} \frac{d_y}{d_x}}{2 \theta(1,\varepsilon)} \le \dfrac{1+\varepsilon D}{2\theta(1,\varepsilon)}$. In particular when $\varepsilon=\frac{1}{D}$, we have  $c_{\varepsilon,x}\le c$.
\end{proof}

Finally, we present the proof of the diameter bound in the case of strictly positive entropic curvature.

\begin{proof}[Proof of Theorem \ref{thm:BonnetMyers}]
	
The proof follows ideas of \cite[Theorem 2.1]{LMP18}. Consider a particular choice of function $f\in C(X)$ given by $f(x)=d(x,x_0)$ for an arbitrary reference point $x_0\in X$. Since $f$ is a Lipschitz function with constant $1$, it follows that $\Gamma (f)(x)= \sum_{y\in N(x)} \frac{1}{d_x}(f(y)-f(x))^2\le 1$ for all $x\in X$, i.e., $\|\Gamma (f)\|_\infty \le 1$, which then implies $\| P_t \Gamma(f)\|_\infty \le \| \Gamma(f)\|_\infty \le 1$.

Moreover, Cauchy-Schwartz and inequality \eqref{eq:grad_est_result} give
\begin{eqnarray} \label{eq:GammaPt_bound}
\left|\Delta P_t f(x)\right|^2 
\le \frac{1}{d_x}\sum_{y\in N(x)} (P_tf(y)-P_tf(x))^2
= \Gamma(P_t f)(x)
\le e^{-2\kappa t} c\|P_t\Gamma(f)\|_{\infty}
\le ce^{-2\kappa t}.
\end{eqnarray}
From the fundamental theorem of calculus and the definition of $P_t$, we then obtain $$|f(x)-P_Tf(x)| 
\le \int\limits_{0}^{T} \left|\frac{\partial}{\partial t}P_t f(x)\right|dt
= \int\limits_{0}^{T} \left|\Delta P_t f(x)\right|dt \le \int\limits_{0}^{T} \sqrt{c}e^{-\kappa t} dt \le \frac{\sqrt{c}}{\kappa},$$
which holds true for all $T>0$.

Moreover, the chain of inequalities in \eqref{eq:GammaPt_bound} implies that $|P_tf(y)-P_tf(x)|^2 \le d_x\cdot c e^{-2\kappa t} \rightarrow 0$ as $t\rightarrow \infty$ for all neighbors $y\sim x$. Therefore, $|P_tf(y)-P_tf(x)|\rightarrow 0$ as $t\rightarrow \infty$ for an arbitrary pair of $x,y$ (by considering a connected path from $x$ to $y$).

Passing to the limit $T\rightarrow \infty$, we can conclude from triangle inequality that $$\dd(x,x_0) = |f(x)-f(x_0)| \le |f(x)-P_Tf(x)|+|f(x_0)-P_Tf(x_0)|+|P_Tf(x)-P_Tf(x_0)| \le \dfrac{2\sqrt{c}}{\kappa}.$$
Since $x,x_0$ are arbitrary, we obtain the desired diameter bound.
\end{proof}

\end{document}